\title{A dichotomy for central sequence algebras}
\author{Ilijas Farah}
\address{Department of Mathematics and Statistics\\
	York University\\
	4700 Keele Street\\
	North York, Ontario\\ Canada, M3J 1P3\\
	and 
	Ma\-te\-ma\-ti\-\v cki Institut SANU\\
	Kneza Mihaila 36\\
	11\,000 Beograd, p.p. 367\\
	Serbia}
\email{ifarah@yorku.ca}
\urladdr{https://ifarah.mathstats.yorku.ca}
\thanks{I.F. is partially supported by NSERC}
\author{Ilan Hirshberg}
\address{Department of Mathematics, Ben Gurion University of the Negev, P.O.B. 653, Be'er \indent Sheva 84105, Israel}
\email{ilan@math.bgu.ac.il}
\theoremstyle{plain}
\newtheorem{Thm}{Theorem}%[section]
\newtheorem{Lemma}[Thm]{Lemma}
\theoremstyle{definition}
\newtheorem{Def}[Thm]{Definition}
\newcommand{\N}{{\mathbb N}}
\newcommand{\C}{{\mathbb C}}
\newcommand{\prodM}{\prod_{n=1}^\infty M_n}
\newcommand{\prodCM}{\prod_{n=1}^\infty CM_n}
\newcommand{\cstar}{$\mathrm{C}^*$}
\newcommand{\cst}{\mathrm{C}^*}
\newcommand{\cU}{\mathcal U}
\newcommand{\bbN}{\mathbb N}
\newcommand{\bt}{\mathbf t}
\DeclareMathOperator{\Diag}{Diag}
\begin{document}
\begin{abstract}
We prove that the central sequence algebra of a separable \cstar-algebra is either subhomogeneous or non-exact, confirming a conjecture of Enders and Shulman. We also prove analogous dichotomy for other massive \cstar-algebras. 
\end{abstract}
\maketitle

By refining a result of \cite{ando2016non}, in \cite{enders2022commutativity} it was proven that for a separable \cstar-algebra $A$ the central sequence algebra $A_\cU\cap A'$ (where $\cU$ is a nonprincipal  ultrafilter on $\bbN$, fixed throughout,  and $A_\cU$ is the \cstar-algebra ultrapower) ) is subhomogeneous if and only if $A$ satisfies Fell's condition of some finite order (\cite[Section 3]{enders2022commutativity}).  Tatiana Shulman conjectured (personal communication) that for a separable $A$,  $A_\cU\cap A'$ is either subhomogeneous or non-type I. 
We provide a strong confirmation of this conjecture, as well as an analogous statement for corona algebras. Namely, we show:

\begin{Thm}\label{T.1}
	If $A$ is a separable \cstar-algebra and $B$ is a separable \cstar-subalgebra of $A_{\cU}$,  then $A_\cU\cap B'$ is either subhomogeneous or non-exact. 
\end{Thm}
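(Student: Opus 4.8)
The plan is to prove the dichotomy in contrapositive form: assuming $D:=A_\cU\cap B'$ is \emph{not} subhomogeneous, I will exhibit inside $D$ an isomorphic copy of a non-exact $\cst$-algebra. Since exactness is inherited by $\cst$-subalgebras and is separably determined (a $\cst$-algebra is exact iff all of its separable subalgebras are), producing a single non-exact subalgebra forces $D$ to be non-exact, as required. The non-exact model I would aim for is an ultraproduct of cones of matrix algebras, $\prod_{\cU} CM_{k_n}$ with $k_n\to\infty$, where $CM_k=C_0((0,1])\otimes M_k$. The decisive reason for working with cones rather than with the matrix ultraproduct $\prod_\cU M_{k_n}$ is that cones are \emph{projectionless} and, more importantly, \emph{projective} in the sense of Loring; this keeps them available even when $D$ has essentially no projections (e.g.\ $D=\Zh_\cU$, which is projectionless), a case in which no copy of $\prod_\cU M_{k_n}$ can exist.

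The first ingredient is that $\prod_\cU CM_{k_n}$ is non-exact whenever $k_n\to\infty$. This I would reduce to the non-exactness of the matrix ultraproduct: evaluation at the endpoint $1$ gives a surjection $\prod_\cU CM_{k_n}\twoheadrightarrow\prod_\cU M_{k_n}$, and since exactness passes to quotients it suffices to know that $\prod_\cU M_{k_n}$ is non-exact for $k_n\to\infty$. The latter is in the spirit of Wassermann's non-exactness of $\prodM$ and of the Junge--Pisier theory producing operator spaces of unbounded exactness constant inside matrix algebras of growing size; I would cite or adapt these to the ultraproduct.

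The heart of the argument, and the step I expect to be the main obstacle, is to manufacture such a copy inside $D$ from the mere failure of subhomogeneity. One must be careful: for a \emph{general} $\cst$-algebra, non-subhomogeneity does not even yield approximate matrix units or order-zero maps of unbounded size --- the algebra $\prod_n\{f\in C([0,1],M_n):f(0)\ \mathrm{diagonal}\}$ is non-subhomogeneous yet carries no approximate $3\times 3$ system, the obstruction being topological. What rescues us is that $D=A_\cU\cap B'$ is a relative commutant in an ultrapower and hence is \emph{countably saturated}. Concretely, non-subhomogeneity of $D$ supplies, for every $k$, an irreducible representation of dimension at least $k$; Kadison transitivity yields approximate c.p.c.\ order-zero maps $M_k\to D$ modulo a kernel, and countable saturation is precisely what promotes these to \emph{honest} c.p.c.\ order-zero maps $\phi_k\colon M_k\to D$ (this is the point where more than bare non-subhomogeneity is used). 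Projectivity of the cone then repackages each $\phi_k$ as a genuine $\cst$-homomorphism $CM_k\to D$.

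It remains to amalgamate the homomorphisms $CM_k\to D$ for $k\to\infty$ into a single embedding of $\prod_\cU CM_{k_n}$ whose range lies in $B'$. Representing everything by sequences in $A$ and fixing a countable generating set of $B$, I would run a diagonal reindexing over $\cU$: for each $k$ the order-zero relations and the asymptotic commutation with the first $k$ generators of $B$ hold for $\cU$-almost all indices, and a standard diagonalization selects indices $n\mapsto k(n)$ with $k(n)\to\infty$ along $\cU$ along which all approximations of order $\le k(n)$ hold simultaneously; correcting coordinatewise (again using projectivity of the cones in $A$) yields an injective homomorphism $\prod_\cU CM_{k(n)}\to A_\cU$ landing in $A_\cU\cap B'$. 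With $k(n)\to\infty$ this copy is non-exact by the first ingredient, finishing the proof. The delicate points to verify are that the saturation of the \emph{relative} commutant (not merely of $A_\cU$) is strong enough to drive the extraction, and that the reindexed order-zero maps can be kept simultaneously asymptotically central for a dense subset of $B$ and of dimension tending to infinity.
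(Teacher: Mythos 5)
There is a genuine gap, and it sits exactly where you predicted: the final amalgamation. For the induced map $\Phi\colon \prod_{\cU} CM_{k(n)}\to A_\cU$, $\Phi((x_n)_\cU)=(\sigma_n(x_n))_\cU$, to land in $B'$ you need $\lim_{\cU}\|[\sigma_n(x_n),b_n]\|=0$ for \emph{every} sequence $(x_n)$ with $x_n$ in the unit ball of $CM_{k(n)}$, i.e.\ $\sup_{\|x\|\le 1}\|[\sigma_n(x),b_n]\|\to 0$ along $\cU$; likewise isometry of $\Phi$ requires norm control uniform over the unit balls. A diagonalization can impose only finitely many conditions per coordinate, and this controls only a countable family of ``diagonal'' sequences. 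It does not upgrade to uniform control: the unit ball of $CM_k=C_0((0,1],M_k)$ is infinite-dimensional and admits no finite $\eps$-nets, and commutator estimates of the form $\|[f(a),b]\|$ degrade with the modulus of continuity of $f$ --- a highly oscillating norm-one $f\in C_0((0,1])$ can have $\|[f(a),b_n]\|$ close to $2\|b_n\|$ even when $\|[a,b_n]\|$ is tiny. So commutation of the $\sigma_n$ with $b_n$ on generators (which is all your per-coordinate conditions can secure) does not make the image of the full nonseparable ultraproduct lie in $B'$. What your construction genuinely yields is an embedding into $A_\cU\cap B'$ of the \emph{separable} subalgebra generated by the controlled diagonal sequences --- and making that precise is exactly the paper's route: it fixes a separable non-exact subalgebra $A_1\subseteq\prod_{n=1}^\infty CM_n$ (obtained from $\cst(F_2)$ being RFD and non-exact, Kirchberg's quotient theorem, and the fact that inductive limits of exact algebras are exact) and realizes its countable quantifier-free type in $A_\cU\cap B'$, using that this relative commutant is countably quantifier-free saturated for types in countably many variables. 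That saturation lemma also settles the worry you flag about saturation of the relative commutant rather than of $A_\cU$; but note that only a separable target is ever embedded, which is why the argument closes.

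Two secondary corrections. First, you do not need saturation to manufacture the order-zero maps, and your purported obstruction is not real: non-subhomogeneity alone gives, for each $k$, an irreducible representation $\pi$ of dimension at least $k$; an embedding $M_k\to\pi(D)''$ plus the Kaplansky density theorem for c.p.c.\ order zero maps yields a nonzero \emph{honest} order zero map $M_k\to\pi(D)$, and projectivity of $CM_k$ lifts the corresponding cone homomorphism from $\pi(D)\cong D/\ker\pi$ back to $D$. In particular your example $\prod_n\{f\in C([0,1],M_n): f(0)\ \mathrm{diagonal}\}$ does carry homomorphic copies of $CM_3$ (take $g\otimes a$ with $g(0)=0$ inside any factor with $n\ge 3$), so it is not a counterexample to anything. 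Second, you pass over the injectivity of the cone homomorphisms: a nonzero homomorphism $CM_k\to D$ need not be faithful, and one cannot simply ``correct'' this by projectivity. The paper's Lemma~\ref{L.CM} supplies the needed dichotomy --- the image is either isomorphic to $CM_n$ or contains a copy of $M_n$ --- which splits the argument into a cone case and a matrix case, and, via the block-diagonal inclusions $\bigoplus_{k=1}^n CM_k\subseteq CM_{n(n+1)/2}$ and $\bigoplus_{k=1}^n M_k\subseteq M_{n(n+1)/2}$, produces the orthogonal finite sums needed to approximately satisfy the finite fragments of the type of $A_1$ (or of the matrix analogue $A_2$). Without some version of this step, your coordinate maps $\sigma_n$ may be far from isometric and the image of $\Phi$ could a priori be exact.
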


\begin{Thm}\label{T.corona}
	Let $A$ be a  $\sigma$-unital, non-unital, \cstar-algebra and let $B$ be a separable \cstar-subalgebra of the corona algebra $Q(A) = M(A)/A$. Then $Q(A) \cap B'$ is either subhomogeneous or non-exact. 
\end{Thm}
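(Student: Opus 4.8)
The plan is to reduce both Theorems~\ref{T.1} and \ref{T.corona} to a single abstract statement: \emph{if $C$ is a countably degree-$1$ saturated \cstar-algebra that is not subhomogeneous, then $C$ contains a unital copy of $\prodM$, and is therefore non-exact.} The non-exactness is then immediate, since it is a classical result of Wassermann that $\prodM$ is not exact, while exactness passes to arbitrary \cstar-subalgebras. To obtain Theorem~\ref{T.corona} from this, I would verify that $C:=Q(A)\cap B'$ is countably degree-$1$ saturated. The ambient algebra $Q(A)$ is countably degree-$1$ saturated by the theorem of Farah and Hart on coronas of $\sigma$-unital \cstar-algebras, and the relative commutant of the separable subalgebra $B$ inherits this: fixing a dense sequence $(b_m)$ in $B$, the requirement of commuting with $B$ becomes the countable family of degree-$1$ conditions $\|[x,b_m]\|=0$, which may be appended to any degree-$1$ type over $Q(A)$ with parameters in $C$; its realization in $Q(A)$ then automatically lands in $C$. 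Hence any copy of $\prodM$ produced by the abstract statement lies in $Q(A)\cap B'$, as required.

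For the abstract statement, the first task is to extract finite-dimensional structure from the failure of subhomogeneity. Since $C$ is not subhomogeneous, for each $k$ it admits an irreducible representation of dimension at least $k$; applying Kadison transitivity on a $k$-dimensional subspace and cutting down produces, for every $\eps>0$, a system of $\eps$-approximate $k\times k$ matrix units in $C$, and by stability of the finite-dimensional matrix relations these perturb to an exact unital copy of $M_k$. The point at which the massiveness of $C$ is indispensable is in arranging these copies \emph{simultaneously and in pairwise orthogonal corners}: I would produce orthogonal projections $(p_k)_k$ in $C$ together with unital embeddings $\iota_k\colon M_k\hookrightarrow p_kCp_k$, either by invoking the saturation of $C$ to realize the relevant system or, more concretely for the corona, by supporting the successive blocks on disjoint pieces of a quasicentral approximate unit of $A$ inside $M(A)$. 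A genuine technical point here is that the matrix-unit and orthogonality relations are of degree $2$, whereas only degree-$1$ saturation is available; this is reconciled using the stability of the finite-dimensional relations, which lets one feed purely approximate (hence degree-$1$-accessible) data into the construction and then straighten it.

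The previous step only yields an embedding of $\bigoplus_k M_k$, which is of type~I and therefore exact, so the heart of the matter is the passage from the direct sum to the product $\prodM$. For this I would use that a countably degree-$1$ saturated corona closes bounded orthogonal families under infinite sums, via the SAW$^*$ property and strict convergence in $M(A)$: given $x=(x_k)_k\in\prodM$, the single-variable degree-$1$ type consisting of $p_kyp_k=\iota_k(x_k)$ and $[y,p_k]=0$ for all $k$, together with $y=pyp$ for a fixed projection $p$ dominating every $p_k$, is finitely approximately satisfiable by truncated sums and hence realized; the resulting $y$ is unique, and $x\mapsto y$ defines the desired injective unital $*$-homomorphism $\prodM\hookrightarrow C$. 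The main obstacle, and the step demanding the most care, is exactly this assembly: constructing the dominating projection $p=\bigvee_k p_k$ inside the corona and checking that the blockwise assignment is well defined, multiplicative, and isometric --- in particular that the chosen blocks are not absorbed at infinity, so that the map is faithful. This is where the argument leaves the finite-dimensional world and relies essentially on the structure of $Q(A)$ rather than on representation theory alone.
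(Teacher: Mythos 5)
Your proposal rests on an ``abstract statement'' --- that every countably degree-$1$ saturated, non-subhomogeneous \cstar-algebra contains a unital copy of $\prod_{n=1}^\infty M_n$ --- which is both unavailable and false. Unavailable: the paper's saturation argument (Theorem~\ref{thm:main}) needs countable \emph{quantifier-free} saturation, i.e.\ realization of types built from $*$-polynomials of arbitrary degree in countably many variables, in order to realize the type $\bt_{A_i}$ of a dense sequence in a separable non-exact subalgebra of $\prod_n M_n$ or $\prod_n CM_n$; whether mere degree-$1$ saturation suffices is explicitly stated in the paper as an open conjecture, and the paper points out that relative commutants in coronas are \emph{not} countably quantifier-free saturated --- which is precisely why Theorem~\ref{T.corona} receives a separate, direct proof instead of the reduction you propose. (Your verification that $Q(A)\cap B'$ is countably degree-$1$ saturated via Farah--Hart plus the commutator conditions $\|[x,b_m]\|=0$ is fine, but it buys you nothing here.) False: an ultraproduct such as $\prod_{\cU} CM_n$ is fully countably saturated and not subhomogeneous, yet it has no nontrivial projections (an approximate projection $p(t)$ in $C_0((0,1],M_n)$ has eigenvalue paths continuous in $t$ and vanishing at $t=0$, forcing an eigenvalue near $1/2$), hence contains no unital copy of $M_2$ at all; the same eigenvalue argument defeats the unitized version. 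This is also why your extraction step fails: from an irreducible representation of dimension $\geq k$ one cannot perturb to a genuine copy of $M_k$ in $C$ --- Kadison transitivity controls $\pi(a)$ only on the chosen finite-dimensional subspace, relations do not transfer back through a non-faithful $\pi$, and unital copies of $M_k$ do not lift along quotient maps (indeed $M_k$ is a quotient of the projectionless $CM_k$). Only the cone relations are projective, which is why the paper works with c.p.c.\ order zero maps (Kaplansky density for order zero maps, Winter--Zacharias duality, Loring's projectivity of $CM_k$) and must confront, via Lemma~\ref{L.CM}, the genuinely possible cone alternative in which one obtains $\prod_n CM_n$-type structure rather than matrix algebras.

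The assembly step has an independent gap. The degree-$1$ type consisting of $p_kyp_k=\iota_k(x_k)$, $[y,p_k]=0$, $y=pyp$ does not determine $y$: if $y$ and $y'$ both realize it, then $z=y-y'$ satisfies only $zp_k=0$ for all $k$ and $z=pzp$, which permits nonzero $z$ ``supported at infinity'' (already in $\ell^\infty/c_0$, the image of every finite block annihilates nothing), so ``the resulting $y$ is unique'' is wrong and $x\mapsto y$ is not well defined, let alone multiplicative. Moreover, any blockwise construction along an approximate unit of $A$ puts each individual block inside $A$ --- with $e_n=f_{2n+1}-f_{2n}\in A$ one has $e_n\tilde\psi_n(x_n)e_n\in A$ --- so in $Q(A)$ such a construction necessarily kills $\bigoplus_n M_n$ and can only yield $\prod_n M_n/\bigoplus_n M_n$ (or $\prod_n CM_n/\bigoplus_n CM_n$), never $\prod_n M_n$ itself. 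This is in fact exactly what the paper does: it lifts order zero maps $\tilde\psi_n\colon M_n\to\tilde B=\pi^{-1}(B)$, takes a quasicentral approximate unit $(f_n)$ with $f_{n+1}f_n=f_n$ asymptotically commuting with the ranges of the lifts, passes to a subsequence so that $\|e_n\tilde\psi_n(x)e_n\|>(1-\tfrac1n)\|x\|$ (this handles your ``not absorbed at infinity'' worry), and forms the single completely positive map $\Psi(x)=\sum_n e_n\tilde\psi_n(x_n)e_n$ with strictly convergent sum; quasicentrality makes $\pi\circ\Psi$ order zero, and its range is one of the two non-exact corona-type quotients above, which suffices since exactness passes to subalgebras. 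So your instinct to use disjoint pieces of a quasicentral approximate unit is the right one, but to repair the proposal you must drop the saturation reduction entirely, replace unital matrix copies by homomorphisms from cones together with the Lemma~\ref{L.CM} dichotomy, and aim for the quotient $\prod/\bigoplus$ rather than the full product.
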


The proofs are related to Kirchberg's proof that an ultrapower of a \cstar-algebra $A$ is nuclear if and only if it is subhomogeneous  (\cite[Proposition 3.14.1]{Muenster}). 
Readers familiar with the notion of countable degree-1 saturation (see \cite[\S 15]{Fa:STCstar}) may jump to the conclusion that the dichotomy exhibited in  Theorems~\ref{T.1} and~\ref{T.corona} holds for all countably degree-1 saturated \cstar-algebras. While we conjecture that this is the case, we haven't been able to prove it.

%\begin{Thm} \label{T.2} Suppose that $C$ is (i) an ultrapower of a separable \cstar-algebra, (ii) the corona of a $\sigma$-unital, non-unital, \cstar-algebra, or (iii) the relative commutant of a separable \cstar-subalgebra of a \cstar-algebra as in (i) or (ii). Then $C$ is is either subhomogeneous or not exact. 
%\end{Thm}

For information on types and saturation see \cite[\S\S  15--16]{Fa:STCstar}. We only need the following simple variant (\cite[Definition~15.2.1]{Fa:STCstar}). 

\begin{Def} A \emph{quantifier-free type} in variables $x_n$, for $n\in \bbN$, is a set of conditions of the form 
	\[
	\|p(x_0,\dots, x_{n-1})\|=r
	\] 
	where $p$ is a *-polynomial in non-commuting variables. \footnote{Readers familiar with \cite[\S 16]{Fa:STCstar} will notice that the general definition allows polynomials with coefficients in a \cstar-algebra $C$; this is (quantifier-free) type \emph{over $C$}. This generality will not be needed here.}A quantifier-free type $\bt$ is \emph{realized} in a \cstar-algebra $C$ if there are $(a_n)_{n\in \bbN}$ in the unit ball of $C$ such that $\|p(a_0,\dots, a_n)\|=r$ for every condition $\|p(x_0,\dots, x_n)\|=r$ in $\bt$.  It is  \emph{approximately satisfiable} in a \cstar-algebra $C$ if for every finite subset  $\bt_0$ of $\bt$ and for every $\varepsilon>0$ there are $(a_n)_{n\in \bbN}$ in the unit ball of $C$ such that $\left | \|p(a_0,\dots, a_n)\|-r \right |<\varepsilon$ for every condition  $\|p(x_0,\dots, x_n)\|=r$ in $\bt_0$.  
	A \cstar-algebra $B$ is \emph{countably quantifier-free saturated} if every consistent quantifier free type $\bt(\bar{x})$ in countably many variables is realized in $B$.
\end{Def}

Thus we define a \cstar-algebra $C$ to be countably quantifier-free saturated if every countable quantifier-free type \emph{in countably many variables} consistent with its theory is realized in $C$. This is slightly different from the definition normally used in the literature (e.g., \cite[\S 16]{Fa:STCstar}) where only types with finitely many variables are considered. 
In case of full saturation (where arbitrary formulas are allowed), a simple trick shows that the two variants of the definition are equivalent. In case of quantifier-free types this is not obvious, since the trick involves formulas with $\inf$-quantifiers, however this can still be done for central sequence algebras. The proof is essentially identical to that of  \cite[Proposition 16.5.3]{Fa:STCstar}, using countably many variables in place of finitely many variables, so we omit the details. We record it as a the following lemma:
\begin{Lemma}\label{lem:countable_qf_saturation}
	Let $A$ be a separable \cstar-algebra, and let $C$ be a separable \cstar-subalgebra of $A_{\cU}$. Then $A_{\cU} \cap C'$ is countably quantifier-free saturated (for types with countably many variables). 
\end{Lemma}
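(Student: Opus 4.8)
The plan is to show that any consistent quantifier-free type in countably many variables is realized in $A_\cU \cap C'$, by reducing to the known finite-variable saturation result. The standard approach for countable saturation of central sequence algebras (as in \cite[Proposition 16.5.3]{Fa:STCstar}) proceeds by fixing separable subsets and using the countable incompleteness of $\cU$ together with a diagonalization across the ultrafilter. I would first recall that $A_\cU \cap C'$ is known to be countably quantifier-free saturated for types in \emph{finitely} many variables, which is the version proved in the literature; the content of the lemma is upgrading this to countably many variables.

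First I would take a consistent quantifier-free type $\bt(x_0, x_1, \dots)$ in variables $x_n$, $n \in \bbN$, and enumerate its conditions as $\|p_k\| = r_k$, where each $p_k$ involves only finitely many of the variables. For each $m$, let $\bt_m$ be the finite subtype consisting of the first $m$ conditions, which then involves only the variables $x_0, \dots, x_{N(m)-1}$ for some $N(m)$. By consistency, each $\bt_m$ is approximately satisfiable, so by the finite-variable saturation (or by a direct $\cU$-limit argument using representing sequences in $A$), one can realize $\bt_m$ exactly by a tuple lying in $A_\cU \cap C'$. The task is to splice these finite realizations together into a single sequence $(a_n)_n$ realizing the full type.

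The mechanism is a diagonal argument exploiting that $\cU$ is nonprincipal, hence $A_\cU$ is countably incomplete: one chooses a decreasing sequence of sets $X_m \in \cU$ with $\bigcap_m X_m = \emptyset$. For each $n$, the element $a_n$ of the realizing tuple is represented by a sequence $(a_{n,j})_j$ in the unit ball of $A$; commutation with $C$ and the norm conditions each hold $\cU$-approximately along the index $j$. One arranges, for $j \in X_m \setminus X_{m+1}$, that the truncated tuple $(a_{0,j}, \dots, a_{N(m)-1,j})$ approximates $\bt_m$ to within $1/m$ and approximately commutes with a fixed dense sequence in $C$ up to $1/m$. Taking the $\cU$-limit then yields a tuple in $A_\cU \cap C'$ satisfying every condition of $\bt$ exactly, since each fixed condition $\|p_k\| = r_k$ eventually appears in $\bt_m$ and is satisfied with error tending to $0$.

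The main obstacle, and the reason the reduction is not entirely formal, is precisely the point the paper flags: the usual trick for passing from finitely many to countably many variables in the fully-saturated setting uses $\inf$-quantifiers to encode the extra variables, and quantifier-free types do not permit this. The work is therefore in verifying that the diagonalization above genuinely keeps the elements inside the relative commutant $A_\cU \cap C'$ at the limit — one must simultaneously control the growing list of norm conditions, the growing number of variables, and approximate commutation with an increasing finite portion of a countable dense subset of $C$, all along a single index $j$. Since this is carried out in detail in the cited proposition with finitely many variables and the argument transfers verbatim once the bookkeeping accommodates a countable variable list, I would omit the routine verification and simply indicate that the finite-variable proof adapts without change.
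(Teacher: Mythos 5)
Your proof is correct and is essentially the paper's own argument: the paper omits the details, saying only that the proof of \cite[Proposition~16.5.3]{Fa:STCstar} adapts with countably many variables in place of finitely many, and your diagonalization along a decreasing sequence of sets in the countably incomplete ultrafilter --- realizing the finite subtypes $\bt_m$ in $A_\cU \cap C'$ and splicing their representing sequences while simultaneously tracking the norm conditions to within $1/m$ and approximate commutation with a dense sequence in $C$ --- is precisely that adaptation carried out by hand inside the ultrapower. You also correctly identify the same crux the paper flags, namely that the $\inf$-quantifier trick for reducing countably many variables to finitely many is unavailable for quantifier-free types, which is why the argument must be run directly at the level of representing sequences.
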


	Suppose that $A$ is a separable \cstar-algebra, fix a dense subset $(a_n)_{n \in \N}$ of the unit ball of $A$, and  let $\bt_A(\bar{x})$ be the quantifier-free type of  $(a_n)_{n \in \N}$. 	More precisely, fix an enumeration of of *-polynomials in non-commuting variables in $(x_n)_{n \in \N}$, $p_n(x_0,\dots, x_{n-1})$, for $n\in \bbN$, with coefficients in $\mathbb{Q}[i]$ (or any other countable dense subset of $\C$).  (For convenience of notation, we can assume that the variables of the $n$-th polynomial are included in $(x_j)_{j<n}$.) Consider the quantifier-free type $\bt_A(x_n: n\in \N)$ whose conditions are, with $r_n=\|p_n(a_0,\dots, a_{n-1})\|$:
\[
\|p_n(x_0,\dots, x_{n-1})\|=r_n.
\]
The type $\bt_A$ depends on the choice of $a_n$, but this makes no difference. (Instead of $\bt_A$ one could consider the entire atomic diagram $\Diag(A)$, see \cite[p. 15]{Muenster}, but we have no use for it here.)

\begin{Lemma}
Let $A$ and $B$ be \cstar-algebras, and suppose $A$ is separable. Then
\begin{enumerate}
\item $A$ embeds into $B$ if and only if $\bt_A$ is realized in $B$.
\item    $A$ embeds into $B_\cU$ if and only if the type $\bt_A$  is approximately satisfiable in $B$.
\item If $C$ is a separable \cstar-subalgebra of $B_\cU$ then $A$ embeds in $B_\cU\cap C'$ if and only if $\bt_A$ is approximately satisfiable in $B_\cU\cap C'$. 
 \end{enumerate}
\end{Lemma}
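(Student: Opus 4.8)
The plan is to prove (1) first and to derive (2) and (3) from it, since (1) is precisely the bridge between the algebraic notion of an embedding and the logical notion of realizing $\bt_A$. The forward implication of (1) is immediate: if $\varphi\colon A\to B$ is an embedding then it is isometric, so the elements $b_n:=\varphi(a_n)$ lie in the unit ball of $B$ and satisfy $\|p_n(b_0,\dots,b_{n-1})\|=\|\varphi(p_n(a_0,\dots,a_{n-1}))\|=r_n$, i.e. $(b_n)$ realizes $\bt_A$. For the converse I would start from a realizing sequence $(b_n)$ and check that the assignment $p_n(a_0,\dots,a_{n-1})\mapsto p_n(b_0,\dots,b_{n-1})$ defines an isometric $*$-homomorphism on the $\mathbb Q[i]$-$*$-subalgebra $A_0$ generated by $(a_n)$. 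Both well-definedness and isometry follow because the difference of any two $\mathbb Q[i]$-polynomials is again enumerated as some $p_m$, so that $\|q(\bar a)\|=\|q(\bar b)\|$ for every such $q$; in particular $\|q(\bar a)\|=0$ forces $\|q(\bar b)\|=0$. Since $(a_n)$ is dense in the unit ball of $A$, the subalgebra $A_0$ is norm-dense in $A$, and an isometric $*$-homomorphism on a dense subalgebra extends uniquely to an isometric (hence injective) $*$-homomorphism $\varphi\colon A\to B$, which is the desired embedding.

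Item (2) results from applying (1) with $B_\cU$ in place of $B$ together with the standard ultrapower correspondence. By (1), $A$ embeds into $B_\cU$ iff $\bt_A$ is realized in $B_\cU$, so it suffices to show that $\bt_A$ is realized in $B_\cU$ iff it is approximately satisfiable in $B$. For the substantial direction I would, for each $m$, use approximate satisfiability of the finite subtype $\{\|p_j(\bar x)\|=r_j : j\le m\}$ within tolerance $1/m$ to obtain elements $c^{(k)}_m$ in the unit ball of $B$, and then set $b^{(k)}:=[(c^{(k)}_m)_m]\in B_\cU$. For each fixed $j$ and each $\eps>0$ the set of $m$ with $|\,\|p_j(c^{(0)}_m,\dots,c^{(j-1)}_m)\|-r_j|<\eps$ is cofinite, hence in $\cU$, so the ultralimit computing $\|p_j(b^{(0)},\dots,b^{(j-1)})\|$ equals $r_j$ and $(b^{(k)})$ realizes $\bt_A$. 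The reverse direction is the routine remark that a sequence realizing $\bt_A$ in $B_\cU$ may be represented by coordinatewise contractions in $B$ whose coordinates approximately satisfy each finite subtype along a set in $\cU$.

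For (3), applying (1) with $D:=B_\cU\cap C'$ in place of $B$ shows that $A$ embeds into $D$ iff $\bt_A$ is realized in $D$, so it remains to equate realizability and approximate satisfiability of $\bt_A$ in $D$. One implication is trivial, since a realizing sequence approximately satisfies every finite subtype. For the other, since $C$ is a separable subalgebra of the ultrapower $B_\cU$, the relative commutant $D$ is countably quantifier-free saturated by Lemma~\ref{lem:countable_qf_saturation}; approximate satisfiability of $\bt_A$ in $D$ is exactly the consistency hypothesis of that lemma for a type in countably many variables, and saturation then yields a realization of $\bt_A$ in $D$. The step I expect to require the most care is the converse in (1): one must argue that equality of the norms of all $\mathbb Q[i]$-polynomials is enough to pin down an isometric $*$-isomorphism of the generated algebras, using continuity of $*$-polynomial evaluation on the unit ball to pass from $\mathbb Q[i]$-coefficients to arbitrary complex coefficients; the remaining arguments in (2) and (3) are then bookkeeping with ultralimits and a direct appeal to saturation.
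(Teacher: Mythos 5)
Your proposal is correct and follows essentially the same route as the paper, which simply declares items (1) and (2) trivial (your density argument for (1) and diagonalization for (2) are the standard details being elided) and derives (3) from countable saturation of ultraproducts together with \cite[Proposition~16.5.3]{Fa:STCstar} --- exactly the content of Lemma~\ref{lem:countable_qf_saturation} that you invoke. One pedantic remark: in item (3) the algebra $B$ is not assumed separable, whereas Lemma~\ref{lem:countable_qf_saturation} is stated for a separable base algebra, so strictly speaking you should appeal (as the paper does) to the underlying saturation results, which require no separability of $B$; this does not affect the substance of your argument.
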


\begin{proof}
	The first two assertions are trivial. The third assertion follows from countable saturation of ultraproducts (\cite[Theorem~16.4.1]{Fa:STCstar}) and  \cite[Proposition~16.5.3]{Fa:STCstar}. 
\end{proof}

Note that $B_\cU\cap C'$ is not necessarily fully countably saturated, even if both $B$ and $C$ are separable (\cite[Proposition~6.3]{farah2023obstructions}). 

Write  $M_n$ for the algebra of $n\times n$ complex matrices and denote the cone by $CM_n = C_0( (0,1] , M_n)$. 

\begin{Lemma}\label{L.nonexact}
	There exist separable non-exact subalgebras of $\prod_{n=1}^{\infty}M_{n}$ and of $\prod_{n=1}^{\infty} CM_{n}$.
\end{Lemma}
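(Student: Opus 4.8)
The plan is to reduce both statements to the existence of a single separable, non-exact, residually finite-dimensional (RFD) \cstar-algebra, together with two soft embedding facts. I will use two standard properties of exactness: it passes to \cstar-subalgebras, and (crucially below) it passes to quotients. I will also use that a separable \cstar-algebra embeds into $\prodM$ precisely when it is RFD. Indeed, every \cstar-subalgebra of $\prodM$ is RFD because the coordinate projections form a separating family of finite-dimensional representations; conversely, a separable RFD algebra $D$ carries a sequence of finite-dimensional representations $\pi_k\colon D\to M_{d(k)}$ whose direct sum is isometric, so $a\mapsto (\pi_k(a))_k$ is an isometric $*$-homomorphism of $D$ into $\prod_k M_{d(k)}$.

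First I would fix the witness. Let $D=\cst(\mathbb{F}_2)$, the full group \cstar-algebra of the free group on two generators. It is separable, it is RFD by Choi's theorem, and it is non-exact (this is classical; full free group \cstar-algebras fail exactness, e.g.\ because $\cst(\mathbb{F}_\infty)$ embeds into $\cst(\mathbb{F}_2)$). To place $\prod_k M_{d(k)}$ inside $\prodM$, choose a strictly increasing sequence $n_1<n_2<\cdots$ with $n_k\ge d(k)$, send the $k$-th coordinate $x_k\in M_{d(k)}$ into the $n_k$-th coordinate of $\prodM$ as the corner $x_k\oplus 0\in M_{n_k}$, and send all remaining coordinates to $0$. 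Since the sup-norm is preserved and a nonzero element has nonzero image, this is an isometric $*$-homomorphism. Composing with the embedding of $D$ exhibits a separable non-exact \cstar-subalgebra of $\prodM$, proving the first half.

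For the cone I would apply the functor $C_0((0,1])\otimes -$. Since $C_0((0,1])$ is nuclear, tensoring by it preserves exactness; and as $D$ is a quotient of $CD:=C_0((0,1])\otimes D$ (evaluate at $1$) while exactness passes to quotients, $CD$ is non-exact. The minimal tensor product is functorial for injective $*$-homomorphisms, so the embedding $D\hookrightarrow \prod_k M_{d(k)}$ yields $CD\hookrightarrow C_0((0,1])\otimes \prod_k M_{d(k)}$. Using $C_0((0,1])\otimes B=C_0((0,1],B)$ together with the isometric inclusion $C_0((0,1],\prod_k B_k)\hookrightarrow \prod_k C_0((0,1],B_k)$ (the coordinate functions of a norm-continuous, vanishing-at-$0$ vector-valued function are continuous, vanish at $0$, and are uniformly bounded), I obtain
\[
CD \hookrightarrow C_0\bigl((0,1],\textstyle\prod_k M_{d(k)}\bigr) \hookrightarrow \prod_k CM_{d(k)} \hookrightarrow \prodCM,
\]
where the last inclusion is the corner construction of the previous paragraph applied to cones. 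This exhibits a separable non-exact \cstar-subalgebra of $\prodCM$.

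The main obstacle is the input fact itself: producing a separable non-exact RFD \cstar-algebra. All the embedding manipulations are routine, but non-exactness of $\cst(\mathbb{F}_2)$ is a genuine theorem and RFD-ness is Choi's theorem; the argument hinges entirely on having such a witness. This is unavoidable, since by the local nature of exactness $\prodM$ admits a non-exact separable subalgebra if and only if $\prodM$ is itself non-exact, so the substantive content cannot be sidestepped.
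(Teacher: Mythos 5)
Your proof is correct, and for the first half it is essentially the paper's argument: both use $\cst(F_2)$ (separable, RFD by Choi's theorem, non-exact) embedded into $\prodM$ via a separating sequence of finite-dimensional representations together with a corner embedding. For the cone case, however, you take a genuinely different route. The paper argues indirectly: since $M_n$ is a quotient of $CM_n$, $\prodM$ is a quotient of $\prodCM$; by Kirchberg's theorem that quotients of exact \cstar-algebras are exact, $\prodCM$ is non-exact; and since every \cstar-algebra is the inductive limit of its separable subalgebras while inductive limits of exact algebras are exact, $\prodCM$ must contain a separable non-exact subalgebra --- a non-constructive extraction. You instead produce an explicit witness, the cone $CD = C_0((0,1])\otimes \cst(F_2)$, shown non-exact by applying the same quotient theorem to evaluation at $1$, and embedded concretely via $CD \hookrightarrow C_0\bigl((0,1],\prod_k M_{d(k)}\bigr) \hookrightarrow \prod_k CM_{d(k)} \hookrightarrow \prodCM$; each inclusion checks out (coordinate functions of a norm-continuous, vanishing-at-zero map are continuous and vanish at zero, the map is isometric because suprema commute, and the corner trick passes to cones pointwise). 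Note that both routes hinge on the same deep ingredient --- exactness passing to quotients --- merely applied at different spots ($\prodCM\twoheadrightarrow\prodM$ versus $CD\twoheadrightarrow D$), so yours is not more elementary; what it buys is an explicit separable witness and it bypasses the inductive-limit extraction step, while the paper's version is shorter and avoids all tensor-product bookkeeping. One cosmetic flaw: your clause that tensoring with the nuclear algebra $C_0((0,1])$ preserves exactness is never used (and points in the wrong direction for what you need, which is that non-exactness of $D$ forces non-exactness of $CD$); the non-exactness of $CD$ rests entirely on the quotient argument, so that clause should be deleted.
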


\begin{proof} The full group \cstar-algebra $\cst(F_2)$ is not exact by \cite[Corollary~3.7.12]{BrOz:C*}. Since it is separable and residually finite-dimensional (RFD), it is  isomorphic to a subalgebra of $\prodM$ by \cite[Theorem 7.4.1]{BrOz:C*}. 
Since $M_n$ is  a quotient of $CM_n$ for all $n$, $\prodM$ is a quotient of $\prodCM$. 
By \cite[Corollary~9.4.3]{BrOz:C*}, quotients of exact \cstar-algebras are exact (on p. 304 of \cite{BrOz:C*} the authors state that `At present, this is one of the hardest \cstar-results, ever').  Therefore $\prodCM$ is not exact. 
By \cite[Theorem~10.2.6]{BrOz:C*}, an inductive limit of exact \cstar-algebras is exact, and therefore $\prodCM$ has a non-exact separable \cstar-subalgebra. 
\end{proof}

\begin{Lemma}\label{L.CM}
	Let $n \in \N$, let $A$ be a \cstar-algebra and let $\pi \colon CM_n \to A$ be a nonzero homomorphism. Then either the image $\pi$ is isomorphic to $CM_n$ or its image contains a subalgebra isomorphic to $M_n$.
\end{Lemma}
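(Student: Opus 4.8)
The plan is to compute the image of $\pi$ as a quotient of $CM_n$ and then run a dichotomy on whether that quotient contains a nonzero projection. Since $CM_n = C_0((0,1],M_n) \cong C_0((0,1]) \otimes M_n$ and $M_n$ is simple, the closed two-sided ideals of $CM_n$ are exactly those of the form $C_0(U,M_n)$ for $U \subseteq (0,1]$ open; this is the standard correspondence between ideals of $C_0(X,M_n)$ and open subsets of $X$. Hence $\ker \pi = C_0(U,M_n)$ for some open $U$, and writing $F := (0,1] \setminus U$, which is closed and nonempty because $\pi \neq 0$, the image is $\pi(CM_n) \cong CM_n/\ker\pi \cong C_0(F,M_n)$. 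So everything reduces to analysing $C_0(F,M_n)$ for a nonempty closed $F \subseteq (0,1]$.

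Next I would observe that the two alternatives are separated by a single invariant: whether $B := C_0(F,M_n)$ contains a nonzero projection. On the one hand, any subalgebra isomorphic to $M_n$ carries its own unit, which is a nonzero projection of $B$; on the other hand $CM_n$ itself has no nonzero projections. Thus it suffices to prove: (i) if $B$ has a nonzero projection then $B$ contains a copy of $M_n$, and (ii) if $B$ has no nonzero projection then $B \cong CM_n$.

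For (i): given a nonzero projection $p \in B$, consider the function $t \mapsto \|p(t)\|$ on $F$, which is continuous and takes values in $\{0,1\}$, so its support $S = \{t \in F : p(t) \neq 0\}$ is clopen in $F$; and since $p \in C_0(F,M_n)$, the set $S = \{t : \|p(t)\| \geq 1/2\}$ is compact. Thus $S$ is a nonempty compact open subset of $F$, $B$ splits as the direct sum $C(S,M_n) \oplus C_0(F \setminus S, M_n)$, and the constant functions $M_n \to C(S,M_n)$ embed $M_n$ into $B$.

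The crux is (ii), and it is purely topological. If $B$ has no nonzero projection, then by the support argument in (i) the space $F$ has no nonempty compact open subset. Consider $\widehat F := F \cup \{0\} \subseteq [0,1]$, a compact space. Any clopen subset $K$ of $\widehat F$ with $0 \notin K$ satisfies $K \subseteq F$, is compact (closed in $\widehat F$) and open in $F$, hence empty; taking complements, any clopen subset containing $0$ must be all of $\widehat F$. Therefore the only clopen subsets of $\widehat F$ are $\emptyset$ and $\widehat F$, so $\widehat F$ is connected. A connected compact subset of $\R$ whose minimum is $0$ equals $[0,c]$ with $c = \max F > 0$, so $F = (0,c] \cong (0,1]$ and $B = C_0(F,M_n) \cong C_0((0,1],M_n) = CM_n$. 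I expect this connectedness step — turning ``$F$ has no compact open piece'' into ``$F$ is a half-open interval'' — to be the part requiring the most care, whereas the ideal computation and the projection-support argument are routine.
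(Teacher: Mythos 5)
Your proposal is correct and is essentially the paper's proof in different packaging: the paper identifies the image as $C_0(\sigma(\pi(h))\setminus\{0\},M_n)$ for $h(t)=t1_{M_n}$ (this set is exactly your $F$), and when the spectrum is not of the form $[0,r]$ it picks a gap point $r_0\in(0,\|h\|)\setminus\sigma(\pi(h))$ and embeds $M_n$ via $a\mapsto \xi_{(r_0,1]\cap\sigma(\pi(h))}\otimes a$, which is the same indicator-of-a-compact-clopen-piece construction driving your projection/connectedness dichotomy, your step (ii) being a rephrasing of ``if there is no spectral gap then $\sigma(\pi(h))=[0,r]$''. One small repair: in (ii) the implication you actually need is the \emph{converse} of what (i) establishes, namely that a nonempty compact open $S\subseteq F$ yields the nonzero projection $\xi_S\otimes 1_{M_n}\in C_0(F,M_n)$; this is immediate (the indicator of a compact clopen set is continuous and vanishes at infinity) but should be stated rather than attributed to (i).
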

\begin{proof}
Let $h \in CM_n$ be given by $h(t) = t 1_{M_n}$. Then $\pi ( CM_n ) \cong C_0 (\sigma ( \pi(h) ) \smallsetminus \{0\} , M_n)$ (where $\sigma (\pi(h))$ is the spectrum of $\pi(h)$). If $\sigma (\pi(h)) = [0,r]$ for some $r \in (0,1]$, then $\pi (CM_n) \cong CM_n$. If not, then pick $r_0 \in (0, \| h \| ) \smallsetminus \sigma (\pi(h))$. Then $\rho \colon M_n \to \pi (CM_n)$ by $\rho(a) =  \xi_{ (r_0,1) \cap \sigma (\pi(h)) } \otimes a$ is an injective homomorphism.
\end{proof}

\begin{Thm}\label{thm:main}
	Let $B$ be a countably quantifier-free saturated \cstar-algebra. Either $B$ is $n$-subhomogeneous for some $n \in \N$ or $B$ is not exact.
\end{Thm}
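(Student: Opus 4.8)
The plan is to prove the contrapositive: assuming that $B$ is not $n$-subhomogeneous for any $n$, I would produce a separable non-exact \cstar-subalgebra of $B$. Since a \cstar-subalgebra of an exact \cstar-algebra is exact, this forces $B$ to be non-exact. Using Lemma~\ref{L.nonexact}, fix once and for all separable non-exact \cstar-subalgebras $D_0 \subseteq \prodM$ and $D_1 \subseteq \prodCM$, with quantifier-free types $\bt_{D_0}$ and $\bt_{D_1}$ as constructed before Lemma~\ref{L.nonexact}. By the lemma above characterizing embeddings through $\bt_A$, one has $D_i \hookrightarrow B$ exactly when $\bt_{D_i}$ is realized in $B$, and since $B$ is countably quantifier-free saturated, realization is equivalent to approximate satisfiability. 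Thus it suffices to show that at least one of $\bt_{D_0}, \bt_{D_1}$ is approximately satisfiable in $B$.

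The reduction to a single finite stage is the routine part. A finite fragment of $\bt_{D_i}$ involves finitely many *-polynomials $p$, and the value $\|p(\bar d)\|$ in $\prodM$ (or $\prodCM$) is the supremum over $n$ of the coordinate norms $\|p(\bar d^{(n)})\|$, attained to within $\eps$ by some coordinate $n \le N$. Now $\bigoplus_{n \le N} M_n$ embeds block-diagonally into the single matrix algebra $M_K$ with $K = \sum_{n\le N} n$, and likewise $\bigoplus_{n\le N} CM_n \hookrightarrow CM_K$; hence it is enough to produce, for arbitrarily large $K$, an isometric copy of $M_K$ in $B$ (to handle $\bt_{D_0}$) or of $CM_K$ in $B$ (to handle $\bt_{D_1}$). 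Feeding the truncated generators $\bar d^{\le N}$ through such an embedding yields contractions $\bar a$ with $\|p(\bar a)\| = \max_{n\le N}\|p(\bar d^{(n)})\|$, which lies within $\eps$ of $\|p(\bar d)\|$.

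So everything reduces to the claim that for arbitrarily large $k$ there is a nonzero homomorphism $CM_k \to B$: by Lemma~\ref{L.CM} the image of such a homomorphism is either $\cong CM_k$ or contains a copy of $M_k$, so each $k$ is assigned one of two types, and by the pigeonhole principle one type occurs for unboundedly many $k$, supplying copies of $CM_K$ (respectively $M_K$) of arbitrarily large size and hence the approximate satisfiability of $\bt_{D_1}$ (respectively $\bt_{D_0}$). To build the homomorphisms I would use representation theory: non-subhomogeneity furnishes irreducible representations $\pi$ of dimension $\ge k$. When $\pi$ is finite-dimensional there is a surjection $B \twoheadrightarrow M_d$ with $d \ge k$, and when $\pi$ is infinite-dimensional with $\pi(B) \cap \K \ne \{0\}$ one has $M_k \subseteq \K \subseteq B/\ker \pi$; in either case the projectivity of the cone $CM_k$ lets me lift the composite $CM_k \twoheadrightarrow M_k \hookrightarrow B/\ker\pi$ to an honest homomorphism $CM_k \to B$.

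The hard part will be the remaining case, where the only available irreducible representations are infinite-dimensional with $\pi(B) \cap \K = \{0\}$: the quotient $B/\ker\pi$ then carries no finite system of matrix units to lift, so the clean projectivity argument breaks down. Here I would fall back on Glimm's lemma to manufacture, for each $\eps > 0$, an $\eps$-approximate copy of $CM_k$ inside $B$ — not merely approximately orthogonal positive elements, but the scaled partial isometries realizing the full noncommutative cone relations, which is the delicate point. Such approximate copies are exactly what approximate satisfiability of $\bt_{D_i}$ demands, and countable quantifier-free saturation of $B$ then upgrades them to the sought embedding $D_i \hookrightarrow B$. Saturation is genuinely indispensable at this last step: there are non-subhomogeneous exact \cstar-algebras, such as $\bigoplus_n M_n$, in which these approximate copies are present yet cannot be assembled into an honest non-exact subalgebra — it is precisely the passage from ``approximately satisfiable'' to ``realized'' that the saturation of $B$ supplies and that such examples lack.
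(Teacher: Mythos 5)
Your overall architecture --- arguing the contrapositive, reducing approximate satisfiability of $\bt_{D_i}$ to finding arbitrarily large copies of $M_K$ or $CM_K$ in $B$, the pigeonhole dichotomy via Lemma~\ref{L.CM}, and the final upgrade from approximate satisfiability to realization via countable quantifier-free saturation --- is exactly the paper's, and those parts are correct (your block-diagonal reduction to a single matrix size is a cosmetic variant of the paper's use of $M_{n(n+1)/2}$). The proof therefore stands or falls on the one step you defer: producing, for arbitrarily large $k$, a nonzero homomorphism $CM_k \to B$. Your cases (a) (finite-dimensional irreducible $\pi$) and (b) ($\pi(B)\cap\K\neq\{0\}$, hence $\K\subseteq\pi(B)$) are fine, but case (c), where the available irreducible representations satisfy $\pi(B)\cap\K=\{0\}$, is precisely the crux, and there you only promise to ``manufacture, via Glimm's lemma, $\eps$-approximate copies of $CM_k$,'' yourself flagging the construction of the approximate cone relations as the delicate point without carrying it out. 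Glimm's lemma in its usual form (excising a state that vanishes on the compacts) hands you approximately orthogonal positive elements, but extracting the full system of approximate matrix-unit/cone relations demanded by $\bt_{D_i}$ is essentially Glimm's construction from his type~I theorem and would have to be redone from scratch; as written this is a genuine gap, and it sits exactly where the paper's proof has its one nontrivial idea.

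The paper closes this uniformly, with no case division at all: since $\pi$ is irreducible, $\pi(B)''=B(H)$ contains a copy $\rho_0$ of $M_k$ whenever $\dim H\ge k$, and the Kaplansky density theorem for c.p.c.\ order zero maps (\cite[Lemma 1.1]{HKW}) yields a net of c.p.c.\ order zero maps $M_k\to\pi(B)$ converging weak$^*$ to $\rho_0$, hence a nonzero one; by \cite[Corollary~4.1]{WiZac:Completely} this is a nonzero homomorphism $CM_k\to\pi(B)\cong B/\ker\pi$, which projectivity of the cone (\cite[Theorem 10.2.1]{Loring}) lifts to $CM_k\to B$ --- exactly the homomorphism your argument needs, with no hypothesis about compacts in the image. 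Alternatively, your Glimm route can be completed by quoting the classical theorem rather than rebuilding approximate relations: in case (c) the algebra $\pi(B)$ admits an irreducible representation (the identity one) whose image misses $\K$, so $\pi(B)$ is not type~I, and Glimm's theorem supplies subalgebras of $\pi(B)$ with quotients isomorphic to $M_{2^m}$ for every $m$; pulling back under $\pi$ and lifting $CM_{2^m}\to M_{2^m}$ through the two quotients by projectivity gives nonzero homomorphisms $CM_{2^m}\to B$, which suffices since $CM_k$ embeds into $CM_{2^m}$ once $2^m\ge k$. Either repair makes your proof complete; without one of them, the hard case is asserted rather than proven.
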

\begin{proof}
	Assume that $B$ is not $n$-subhomogeneous for any~	$n$. We show that $B$ is not exact.
	Because $\bigoplus_{k=1}^n M_k$ can be embedded as a (block-diagonal) subalgebra of $M_{n(n+1)/2}$, if there exists a subalgebra of $B$ isomorphic to $M_{n(n+1)/2}$ then we can embed $\bigoplus_{k=1}^n M_k$ in $B$, and  if there exists a subalgebra of $B$ isomorphic to $CM_{n(n+1)/2}$ then we can embed $\bigoplus_{k=1}^n CM_k$ in $B$. 
	For any $n$, because $B$ is not $[ n(n+1)/2 -1 ]$-subhomogeneous, there exists an irreducible representation $\pi$ of $B$ on a Hilbert space of dimension greater or equal to $n(n+1)/2$. Let $\rho_0 \colon M_{n (n+1)/2} \to \pi(B)''$ be an embedding. Using the Kaplansky density theorem for c.p.c. order zero maps (see for example \cite[Lemma 1.1]{HKW}), there exists a net of c.p.c. order zero maps $\rho_{\nu} \colon M_{n (n+1)/2} \to \pi(B)$ which converges weak$^*$ to $\rho_0$. In particular, there exists a nonzero order zero map $\rho \colon M_{n (n+1)/2} \to \pi(B)$. Using the one-to-one correspondence between order zero maps and homomorphisms from cones (\cite[Corollary~4.1]{WiZac:Completely}), by slight abuse of notation, we think of $\rho$ as a homomorphism $\rho \colon CM_{n (n+1)/2} \to \pi(B)$. Using projectivity of cones of matrix algebras (\cite[Theorem 10.2.1]{Loring}), we can lift $\rho$ to a homomorphism $\tilde{\rho} \colon CM_{n (n+1)/2} \to B$.
	
	Thus, for every $n$, there is a nonzero homomorphism $\psi_n \colon CM_{n (n+1)/2} \to B$. If the images of infinitely many of those homomorphisms are isomorphic to $CM_{n (n+1)/2}$, then it means that for any $n$, we have an embedding of  $\bigoplus_{k=1}^n CM_k$ in $B$. If not, then by Lemma~\ref{L.CM} we have an embedding of $\bigoplus_{k=1}^n M_k$ in $B$.

Using Lemma~\ref{L.nonexact}, 	let $A_1$ be some fixed separable non-exact subalgebra of $\prod_{n=1}^{\infty}CM_n$, and let $A_2$ be some fixed separable non-exact subalgebra of $\prod_{n=1}^{\infty}M_n$. 
	
	Assume that we are in the first case, that is, that for any $n$, we have an embedding of  $\bigoplus_{k=1}^n CM_k$ in $B$. 
%%%%%	
 For any finite subset $\bt_0 \subset \bt$ and for any $\varepsilon>0$, because $\bt_0$ is realized in $\prod_{n=1}^{\infty}CM_n$, there exists $N_{\varepsilon} \in \N$ such that $\bt_0$ is realized up to $\varepsilon$ in $\bigoplus_{n=1}^{N_{\varepsilon}}CM_n$. 
 This is because with $\pi_m\colon \prod_{n=1}^{\infty} CM_n\to \prod_{n<m} CM_n$ denoting the projection map, for every $a\in \prod_{n=1}^{\infty} CM_n$ we have  $\|a\|=\lim_{m \to \infty} \|\pi_m(a)\|=\sup_m \| \pi_m(a) \|$. 

	Because $B$ is countably quantified-free saturated, $\bt(\bar{x})$ is realized in $B$, but this means that there exists an embedding of $A_1$ in $B$. Because $A_1$ is not exact, and exactness passes to subalgebras, $B$ is not exact either, as claimed.

	The proof in the second case is identical, replacing $A_1$ with $A_2$.
\end{proof}

\begin{proof}[Proof of Theorem~\ref{T.1}]  This follows from a combination of Theorem~\ref{thm:main} and Lemma~\ref{lem:countable_qf_saturation}. 
\end{proof}

We now prove Theorem~\ref{T.corona}, which is the analogue of Theorem~\ref{T.1} for relative commutants in corona algebras. Those are not countably quantified-free saturated, however one obtains a similar statement, with a very similar proof.
%\begin{Thm}\label{thm.2}
%	Let $A$ be a $\sigma$-unital \cstar-algebra, and let $C$ be a separable \cstar-subalgebra of the corona algebra $Q(A)$. Let $B = Q(A) \cap C'$.  Either $B$ is $n$-subhomogeneous for some $n \in \N$ or $B$ is not exact.
%\end{Thm}
\begin{proof}[Proof of Theorem~\ref{T.corona}]
Assume that $B$ is not $n$-subhomogeneous for any $n$. Arguing as in the proof of Theorem~\ref{thm:main}, for every $n$ we can fix an order zero map $\psi_n \colon M_n \to B$ of norm $1$. Let $\pi \colon M(A) \to Q(A) = M(A)/A$ be the quotient map, and set $\tilde{B} = \pi^{-1}  (  B ) $.  We can lift every $\psi_n$ to an order zero map $\tilde{\psi}_n \colon M_n \to \tilde{B}$. Let $f_1 \leq f_2 \leq f_3 \leq \cdots$ be an  approximate identity for $A$ satisfying $f_{n+1}f_n = f_n$ for all $n$ and such that for every $a$ in the range of  some $\tilde\psi_j$ we have $\lim_{n\to \infty} \|[f_n,a]\|=0$ (\cite[Proposition~1.9.3]{Fa:STCstar}). By passing to a subsequence if needed, we can assume without loss of generality that for every $x \in M_n$ we have
\[
\| ( f_{2n+1} - f_{2n} ) \tilde{\psi}_n(x)  ( f_{2n+1} - f_{2n} ) \| > \left ( 1-\frac{1}{n} \right )\|x\|
\, .
\] 
Set $e_n =  f_{2n+1} - f_{2n}$. Note that $e_1,e_2,e_3,\ldots$ are pairwise orthogonal. By slight abuse of notation and using \cite[Corollary~4.1]{WiZac:Completely} as in the proof of Theorem~\ref{thm:main},  we think of $\tilde{\psi}_n$ and $\psi_n$ as homomorphisms from $CM_n$. By Lemma~\ref{L.CM}, passing to a subsequence, and taking subalgebras, we can assume that either all of them are injective, or that the images of $\psi_n$ all have a copy of $M_n$ in their image, in which case we can replace them by homomorphisms from $M_n$ to $B$. Define $\Psi \colon \prod_{n=1}^{\infty} CM_n \to \tilde{B}$ to be $\Psi (x_1,x_2,x_3,\cdots) = \sum_{n=1}^{\infty} e_n \tilde{\psi}_n(x_n) e_n$, with the sum taken to converge in the strict topology. Then $\Psi$ is a completely positive map, and since $f_n$ approximately commute with the range of $\tilde\psi_n$ for all $n$, the composition $\pi \circ \Psi$ has order zero. Its range is therefore isomorphic either to $ \prod_{n=1}^{\infty} CM_n / \bigoplus_{n=1}^{\infty} CM_n$ or to $ \prod_{n=1}^{\infty} M_n / \bigoplus_{n=1}^{\infty} M_n$. As those are not exact, neither is $B$.
\end{proof}

In the case of the Calkin algebra, a stronger result was obtained in \cite[Lemma~8]{enders2022commutativity}, who proved that the relative commutant of any separable \cstar-subalgebra contains a copy of $B(H)$. 

We don't know whether the analogue of Theorem~\ref{T.1} holds for Kirchberg's modified central sequence algebra, $F(B,A)$ (see \cite[Definition 1.1]{kirchberg_abel}). 
 We note that  $F(B,A)$ (or even $F(A) = F(A,A)$) in general is not countably quantifier-free saturated, so one cannot use the same proof as the one used for Theorem~\ref{T.1}. For example, consider $A = c_0(\N)$. Then \cite[Proposition~1.9 (5)]{kirchberg_abel} implies that  
\[
F(A) \cong A' \cap M ( \overline{   A A_{\cU}  A} )\cong  l^{\infty}(\N). 
\]
Since this is an infinite-dimensional von Neumann algebra,  it is not countably quantifier-free saturated (\cite[Exercise~15.6.4]{Fa:STCstar}).

We point out that the central sequences of nonseparable \cstar-algebras are not as well-behaved; for up-to-date information on the case of ${\mathcal B}(H)$ see \cite{chetcuti2023commutant}. 

\subsection*{Acknowledgments} This work was done during the 2023 Fields Institute Thematic Program on Operator Algebras and Applications. We thank Tatiana Shulman for communicating the problem solved in Theorem~\ref{T.1}. 

\bibliographystyle{plain}
\bibliography{nonexact_bib}

\end{document}